\documentclass[10pt]{amsart}

\usepackage{float}
\usepackage{physics}
\usepackage{amsrefs}

\usepackage{enumerate}
\usepackage{hyperref}
\hypersetup{
    colorlinks = true
}
\urlstyle{same}

\usepackage{amsfonts}
\usepackage{amssymb}
\usepackage{amsmath}
\usepackage{amsthm}
\usepackage[inline]{enumitem}
\usepackage{varwidth}

\usepackage{tikz-cd}
\usepackage{graphics}
\usepackage{graphicx}
\usepackage{color}

\usepackage{stmaryrd} 

\usepackage{xfrac} 

\usepackage[margin=1in]{geometry}

\newtheorem{theorem}{Theorem}
\newtheorem{corollary}[theorem]{Corollary}

\newtheorem{definition}[theorem]{Definition}
\newtheorem{lemma}[theorem]{Lemma}

\newtheorem{question}[theorem]{Question}

\newtheorem{remark}[theorem]{Remark}

\def\s{\subseteq}

\newtheorem*{theoremstar}{Theorem}


\setlength{\parindent}{0pt}
\setlength{\parskip}{8pt}

\usepackage{todonotes}

\title{Approximation properties of torsion classes}
\author[Cox]{Sean Cox}
\address[Cox]{Department of Mathematics and Applied Mathematics, Virginia Commonwealth
University, 1015 Floyd Avenue, Richmond, Virginia 23284, USA}
\email{scox9@vcu.edu}
\author[Poveda]{Alejandro Poveda}
\address[Poveda]{ Department of Mathematics and Center of Mathematical Sciences and Applications, Harvard University, Cambridge MA, 02138, USA}
\email{alejandro@cmsa.fas.harvard.edu}
\author[Trlifaj]{Jan Trlifaj}
\address[Trlifaj]{Faculty of Mathematics and Physics, Department of Algebra, Charles University,  Sokolovsk\'a 83,
186 75 Prague 8, Czech Republic}
\email{trlifaj@karlin.mff.cuni.cz}

\thanks{ The first author
was supported by the National Science Foundation (DMS-2154141). The second author acknowledges support from the Department of Mathematics at Harvard University as well as from the Harvard Center of Mathematical Sciences and Applications (CMSA). The research of the third author was supported by GA\v{C}R 23-05148S}

\subjclass[2020]{16E30, 16S90, 03E75, 16D90, 18G25, 16B70}

\begin{document}

\begin{abstract}
We strengthen a result of Bagaria and Magidor~\cite{MR3152715} about the relationship between large cardinals and torsion classes of abelian groups, and prove that 
\begin{enumerate*}
    \item the \emph{Maximum Deconstructibility} principle introduced in \cite{Cox_MaxDecon} requires large cardinals; it sits, implication-wise, between Vop\v{e}nka's Principle and the existence of an $\omega_1$-strongly compact cardinal. 
    \item While deconstructibility of a class of modules always implies the precovering property by \cite{MR2822215}, the concepts are (consistently) non-equivalent, even for classes of abelian groups closed under extensions, homomorphic images, and colimits.  
\end{enumerate*}
\end{abstract}

\maketitle
\tableofcontents

\section{Introduction}

Approximation theory of modules provides tools for studying general modules by choosing special classes of modules and finding approximations (\emph{precovers} and \emph{preenvelopes)}, and minimal approximations (\emph{covers} and \emph{envelopes}), of the general modules by modules in the chosen classes.

A model case is the special class $\mathcal P_0$ of \emph{projective modules} that yields {projective precovers}, and projective resolutions of modules. Similarly, the class $\mathcal F_0$ of \emph{flat modules} yields flat covers, and minimal flat resolutions of modules, \cite{MR1832549}. Dually, the class $\mathcal I_0$ of \emph{injective modules} yields injective envelopes, and minimal injective coresolutions of modules.

In the case when minimal approximations exist, they provide invariants of modules, the model examples being Bass’ invariants and dual Bass’ invariants of modules over commutative noetherian rings \cite{MR1753146}.

Approximation theory makes it possible to extend classic homological algebra to the setting of relative homological algebra \cite{EilMoo}. Applications of the extended setting are numerous: in the theory of finitely generated modules over artin algebras \cite{AR}, tilting theory of commutative noetherian rings \cite{MR2985554}, and Gorenstein homological algebra \cite{MR3839274}, to name a few.

\medskip

Classic structure theory of modules is based on decompositions into (possibly infinite) direct sums of indecomposable modules. This is a strong tool in the study of modules of finite length and injective modules over commutative noetherian rings \cite{AF}. However, decomposability fails for many other classes of interest. In contrast, the weaker notion of \emph{deconstructibility} (namely, decomposition into a transfinite extension of modules from a given \emph{set} of modules) is almost omnipresent.

The key point is that deconstructible classes of modules always provide precovers, that is, each deconstructible class is precovering \cite{MR2822215}. The opposite question is more subtle:

\begin{question}\label{q_DoesPrecoverImplyDecon}
Suppose $\mathcal C$ is a class of modules that is precovering and closed under transfinite extensions. Is $\mathcal C$ deconstructible?
\end{question}
In this paper we use some results of Bagaria and Magidor from \cite{MR3152715} to provide a negative answer to Question \ref{q_DoesPrecoverImplyDecon}, at least in the absence of large cardinals:
\begin{theoremstar}
If there are no $\omega_1$-strongly compact cardinals, then there is a precovering class of abelian groups that is closed under transfinite extensions, homomorphic images, and colimits, but is \emph{not} deconstructible.
\end{theoremstar}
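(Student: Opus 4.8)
The plan is to take the sought class to be a suitable \emph{torsion class} of abelian groups, for which the precovering property and all three closure conditions are automatic, and then to exploit the analysis of torsion classes after Bagaria--Magidor, corrected above, to arrange that it is not deconstructible. For the automatic part: if $\mathcal{C} \subseteq \mathrm{Ab}$ is a torsion class --- i.e.\ closed under homomorphic images, extensions, and coproducts --- then $\mathcal{C}$ is also closed under directed colimits (a directed colimit in $\mathrm{Ab}$ is a quotient of a coproduct), hence under all colimits and under transfinite extensions; and $\mathcal{C}$ is precovering, because for any $M$ the submodule $t(M) := \sum\{N \leq M : N \in \mathcal{C}\}$ is a homomorphic image of $\bigoplus\{N \leq M : N \in \mathcal{C}\}$, hence lies in $\mathcal{C}$, and any map $N \to M$ with $N \in \mathcal{C}$ has image inside $t(M)$, so $t(M) \hookrightarrow M$ is a $\mathcal{C}$-precover. (Conversely, a class closed under homomorphic images, transfinite extensions, and colimits is a torsion class, so this framing loses nothing.) It therefore suffices to produce, under the hypothesis, a torsion class of abelian groups that is \emph{not} deconstructible.

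For the class itself I would use the results established above in the corrected Bagaria--Magidor framework to fix a torsion class $\mathcal{C}$ of abelian groups --- of the form $\{G : \mathrm{Hom}(G, A) = 0\}$ for a carefully chosen abelian group $A$, or the torsion class generated by a carefully chosen set of almost free groups --- with the crucial property that, whenever $\mathcal{C}$ is $\kappa$-deconstructible, there is an $\omega_1$-strongly compact cardinal $\leq \kappa$. Granting this, the theorem is immediate: being a torsion class, $\mathcal{C}$ meets all the closure and precovering demands by the previous paragraph; and if there is no $\omega_1$-strongly compact cardinal whatsoever, then $\mathcal{C}$ is $\kappa$-deconstructible for no cardinal $\kappa$, hence is not deconstructible.

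The real content, and the main obstacle, is the implication ``$\mathcal{C}$ is $\kappa$-deconstructible $\implies$ there is an $\omega_1$-strongly compact cardinal $\leq \kappa$''. The idea is to run the Bagaria--Magidor compactness phenomena in reverse: from a filtration of a suitably ``incompact'' member $G \in \mathcal{C}$ of size $\geq \kappa$ by members of $\mathcal{C}$ of size $< \kappa$, one extracts a countably complete uniform ultrafilter on an appropriate index set (equivalently, an elementary embedding) witnessing that some cardinal $\leq \kappa$ is $\omega_1$-strongly compact --- in effect a Shelah-style singular-compactness or $\Gamma$-invariant calculation repackaged as an incompactness argument. In the other direction, a failure of $\omega_1$-strong compactness at $\kappa$ furnishes such an incompact $G \in \mathcal{C}$; here it is essential that $\mathcal{C}$ is closed under homomorphic images --- unlike the $\mathrm{Ext}$-orthogonal (cotorsion) classes generated by a set, which are always deconstructible by Eklof--Trlifaj --- so that $G$ can belong to $\mathcal{C}$ while none of the $<\kappa$-sized subquotients of a filtration does. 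The subtle point is the uniformity of the quantifiers: arranging that the global non-existence of any $\omega_1$-strongly compact cardinal rules out $\kappa$-deconstructibility for \emph{every} $\kappa$ at once.
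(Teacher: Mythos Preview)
Your first paragraph is correct and matches the paper exactly: the class is a torsion class, and Lemma~\ref{lem_NicePropertiesTorsionClasses} supplies the covering property and all three closure conditions automatically.

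The gap is in the remaining two paragraphs. You leave the group $A$ unspecified and then sketch a mechanism---extracting a countably complete ultrafilter directly from a $\mathcal{K}^{<\kappa}$-filtration of some ``incompact'' $G$, via singular-compactness or $\Gamma$-invariant ideas---that is not what the Bagaria--Magidor machinery actually delivers, and it is not clear how to make it work. The paper takes $A=\mathbb{Z}$, and the clarified Bagaria--Magidor result (Corollary~\ref{cor_BoundedIFFomega_1sc}) says only that ${}^{\perp_0}\mathbb{Z}$ is \emph{bounded} in $\kappa$ iff $\kappa$ is $\omega_1$-strongly compact. Their argument never touches filtrations; it runs through specific groups built from large products and the structure of $\mathbb{Z}$-valued homomorphisms on them. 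So to invoke their result you must first pass from ``$\kappa$-deconstructible'' to ``bounded in $\kappa$'', and this reduction is exactly the content you are missing.

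That reduction is Lemma~\ref{lemma: key lemma}: for any class closed under homomorphic images and colimits (hence any torsion class), $\kappa$-deconstructibility, $(\kappa,\kappa)$-cofinality, and boundedness in $\kappa$ are all equivalent. The forward direction uses the Hill Lemma; the reverse upgrades boundedness to cofinality using closure under images and colimits, and then recovers a filtration via a stationarity/elementary-submodel argument. No singular compactness or $\Gamma$-invariants enter. With this lemma in hand the theorem is immediate: if ${}^{\perp_0}\mathbb{Z}$ were $\kappa$-deconstructible it would be bounded in $\kappa$, hence $\kappa$ would be $\omega_1$-strongly compact.
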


In fact, the torsion class 
\[
{}^{\perp_0} \mathbb{Z}:= \{ A \in \textbf{Ab} \ : \ \text{Hom}(A,\mathbb{Z})=0 \}
\]
is always a covering class that is closed under transfinite extensions, homomorphic images, and colimits.  In this respect we prove:

\begin{theorem}\label{thm_StrengthenBM}
${}^{\perp_0} \mathbb{Z}$ is deconstructible if and only if there is an $\omega_1$-strongly compact cardinal.    
\end{theorem}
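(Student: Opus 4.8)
The plan is to route through a Löwenheim--Skolem-type reformulation of deconstructibility. Write $\mathcal{D}:={}^{\perp_0}\mathbb{Z}$ and, for an abelian group $H$, let $\nu(H):=\bigcap_{f\in\operatorname{Hom}(H,\mathbb{Z})}\ker f$ be its $\mathbb{Z}$-reject, so that $H\in\mathcal{D}$ iff $\nu(H)=H$; note that $\nu$ is restriction-monotone, i.e.\ $H\le H'$ forces $\nu(H')\cap H\supseteq\nu(H)$ (a homomorphism $H'\to\mathbb{Z}$ restricts to one on $H$). For a cardinal $\kappa$ let $(\dagger)_\kappa$ be the statement that every $G\in\mathcal{D}$ is the directed union of its $\le\kappa$-generated subgroups that belong to $\mathcal{D}$. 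I will show that ``$\mathcal{D}$ is deconstructible'', ``$(\dagger)_\kappa$ holds for some $\kappa$'', and ``there is an $\omega_1$-strongly compact cardinal'' are equivalent.

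\emph{Reformulation.} If $\mathcal{D}=\operatorname{Filt}(\mathcal{S})$ for a set $\mathcal{S}$ of $<\kappa$-presented modules with $\kappa$ regular uncountable, then each $G\in\mathcal{D}$ is $\mathcal{S}$-filtered, so Hill's lemma places every element of $G$ into a $<\kappa$-generated, $\mathcal{S}$-filtered submodule, which lies in $\mathcal{D}$ because $\mathcal{D}$ is closed under transfinite extensions; this is $(\dagger)_\kappa$. Conversely, assume $(\dagger)_\kappa$, fix $G\in\mathcal{D}$, enumerate it, and build a continuous chain $0=A_0\le A_1\le\cdots$ by $A_{\alpha+1}:=A_\alpha+Q_\alpha$ with $Q_\alpha\in\mathcal{D}$ a $\le\kappa$-generated subgroup containing the $\alpha$-th listed element. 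Since $\mathcal{D}$ is closed under coproducts, homomorphic images and colimits (recorded above), every $A_\alpha$ lies in $\mathcal{D}$ and every $A_{\alpha+1}/A_\alpha$ is a $<\kappa^{+}$-presented member of $\mathcal{D}$; hence $G$ is filtered by members of $\mathcal{D}^{<\kappa^{+}}$ and $\mathcal{D}$ is deconstructible.

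\emph{An $\omega_1$-strongly compact cardinal suffices.} Let $\kappa$ be $\omega_1$-strongly compact; I claim $(\dagger)_{\kappa^{+}}$ holds. First: for every $G\in\mathcal{D}$ and $g\in G$ there is a $\le\kappa$-generated $Q\le G$ with $g\in\nu(Q)$. If not, the poset $P$ of $\le\kappa$-generated subgroups of $G$ containing $g$ is $\kappa^{+}$-directed and satisfies $g\notin\nu(Q)$ for all $Q\in P$, so choose $f_Q\in\operatorname{Hom}(Q,\mathbb{Z})$ with $f_Q(g)\ne 0$. The tail filter on $P$ is $\kappa^{+}$-complete, hence extends, by $\omega_1$-strong compactness of $\kappa$, to an $\omega_1$-complete ultrafilter $U$; as $U$ is countably complete and $\mathbb{Z}$ is countable, $\prod_{Q\in P}\mathbb{Z}/U\cong\mathbb{Z}$ canonically (an integer-valued family is $U$-a.e.\ constant). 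The rule $a\mapsto[(a_Q)_Q]_U$, where $a_Q:=a$ if $a\in Q$ and $a_Q:=0$ otherwise, is a homomorphism $G\to\prod_{Q\in P}Q/U$ (well defined and additive since the relevant index sets are tails), and the $f_Q$ induce $\prod_{Q\in P}Q/U\to\prod_{Q\in P}\mathbb{Z}/U\cong\mathbb{Z}$; the composite $F\colon G\to\mathbb{Z}$ has $F(g)=[(f_Q(g))_Q]_U\ne 0$, contradicting $\operatorname{Hom}(G,\mathbb{Z})=0$. A countable back-and-forth then upgrades this to $(\dagger)_{\kappa^{+}}$: for $X\le G$ with $|X|\le\kappa$ set $B_0:=X$, $B_{n+1}:=B_n+\sum_{q}Q_q$ over a generating set of $B_n$ (with $Q_q$ as just produced), so that restriction-monotonicity gives $B_n\le\nu(B_{n+1})$ and $B:=\bigcup_n B_n$ is a $\le\kappa$-generated member of $\mathcal{D}$ containing $X$.

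\emph{Deconstructibility requires an $\omega_1$-strongly compact cardinal.} I prove the contrapositive. Since $(\dagger)_\kappa\Rightarrow(\dagger)_{\kappa'}$ for $\kappa'\ge\kappa$, it suffices to refute $(\dagger)_\kappa$ for arbitrarily large regular $\kappa$; fix such a $\kappa$, automatically not $\omega_1$-strongly compact, and a $\kappa$-complete filter $\mathcal{F}$ on a set $S$ that extends to no $\omega_1$-complete ultrafilter (for instance the fine filter on $P_\kappa\lambda$ for a $\lambda$ carrying no fine $\omega_1$-complete ultrafilter). From $\mathcal{F}$ one builds --- this is, modulo the clarifications of Bagaria--Magidor \cite{MR3152715} made above, their construction --- a quotient $A$ of $\mathbb{Z}^{S}$ by a subgroup of ``$\mathcal{F}$-small-support'' elements, with a distinguished $a^{*}\in A$, such that the \L o\'s--Eda description of $\operatorname{Hom}(\mathbb{Z}^{S},\mathbb{Z})$ together with the non-extendibility of $\mathcal{F}$ forces $\operatorname{Hom}(A,\mathbb{Z})=0$, while every $\le\kappa$-generated subgroup of $A$ containing $a^{*}$ still carries a homomorphism to $\mathbb{Z}$ not killing $a^{*}$; then $A\in\mathcal{D}$ but $a^{*}$ lies in no $\le\kappa$-generated member of $\mathcal{D}$ (such a subgroup would have $a^{*}$ in its reject), so $(\dagger)_\kappa$ fails. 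The crux is exactly this construction --- manufacturing, from a $\kappa$-complete filter with no countably complete ultrafilter extension, an abelian group containing an element that is ``locally free but globally trapped'' --- together with the preliminary task of isolating which of the Bagaria--Magidor assertions, once corrected, deliver precisely the equivalence with the bare existence of an $\omega_1$-strongly compact cardinal; by contrast the reformulation and the forward implication are short, the latter hinging only on the collapse of a countably complete ultrapower of $\mathbb{Z}$.
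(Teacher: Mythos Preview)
Your proof is correct and follows the same architecture as the paper's: reduce deconstructibility of ${}^{\perp_0}\mathbb{Z}$ to a boundedness-type condition (your $(\dagger)_\kappa$, the paper's ``bounded by $\kappa$'') using the closure properties of torsion classes, and then invoke the (clarified) Bagaria--Magidor equivalence between that condition and the existence of an $\omega_1$-strongly compact cardinal. The only differences are in packaging: the paper isolates the reformulation as a general lemma (Lemma~\ref{lemma: key lemma}), using Hill's lemma for one direction and a stationarity criterion from \cite{Cox_MaxDecon} for the other, whereas you give a more direct filtration argument for bounded $\Rightarrow$ deconstructible (which works cleanly here because torsion classes are closed under homomorphic images, not merely quotients); and you spell out the ultraproduct proof of the forward Bagaria--Magidor implication rather than simply citing it, while both you and the paper defer to \cite{MR3152715} for the harder reverse construction.
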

Theorem \ref{thm_StrengthenBM} strengthens a result of Bagaria and Magidor, who got the same equivalence with ``deconstructible" weakened to ``bounded" (we show in Lemma \ref{lemma: key lemma} that these concepts are equivalent, for any class of modules closed under transfinite extensions, homomorphic images, and colimits).

In \cite{Cox_MaxDecon}, the first author proved that any deconstructible class is:
\begin{enumerate}[label=(\alph*)]
    \item\label{item_ClosedTFExt} closed under transfinite extensions (by definition), and
    \item\label{item_AE_quotients} ``eventually almost everywhere closed under quotients".  This is an extremely weak version of saying that if $A \subset B$ are both in the class, then so is $B/A$.  In particular, it holds if the class is closed under homomorphic images.
\end{enumerate}

He introduced the \textbf{Maximum Deconstructiblity} principle, which asserts that any class satisfying both  \ref{item_ClosedTFExt} and \ref{item_AE_quotients} is deconstructible, and proved that 
\begin{center}
    Vop\v{e}nka's Principle (VP) implies Maximum Deconstructibility.
\end{center}
Maximum Deconstructibility appeared very powerful, since \cite{Cox_MaxDecon} showed that it implied deconstructibility of many classes in Gorenstein Homological Algebra that (so far) are not known to be deconstructible in ZFC alone.  But it was unclear whether Maximum Deconstructibility had any large cardinal strength at all.  We again use the  results of Bagaria and Magidor to show that it does:
\begin{theoremstar}
    Maximum Deconstructibility implies the existence of an $\omega_1$-strongly compact cardinal.
\end{theoremstar}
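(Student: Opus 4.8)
The plan is short, because all of the set-theoretic weight has already been placed in Theorem~\ref{thm_StrengthenBM}: Maximum Deconstructibility, applied to the single torsion class $\mathcal{C}={}^{\perp_0}\mathbb{Z}$, forces that class to be deconstructible, and then Theorem~\ref{thm_StrengthenBM} hands us an $\omega_1$-strongly compact cardinal. So the argument is a three-step deduction carried out in $\mathrm{ZFC}+\text{Maximum Deconstructibility}$.

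First I would check that $\mathcal{C}={}^{\perp_0}\mathbb{Z}$ meets the hypotheses \ref{item_ClosedTFExt} and \ref{item_AE_quotients} of Maximum Deconstructibility. As recorded in the introduction, ${}^{\perp_0}\mathbb{Z}$ is a torsion class, hence closed under homomorphic images; by the remark following \ref{item_AE_quotients} this already yields ``eventually almost everywhere closed under quotients''. For \ref{item_ClosedTFExt}, closure under transfinite extensions, one runs the routine induction: if $(A_\alpha)_{\alpha\le\lambda}$ is a continuous filtration of $A=A_\lambda$ with $A_0$ and every $A_{\alpha+1}/A_\alpha$ in ${}^{\perp_0}\mathbb{Z}$, then any $f\in\mathrm{Hom}(A,\mathbb{Z})$ vanishes on $A_0$, vanishes on $A_{\alpha+1}$ as soon as it vanishes on $A_\alpha$ (it then factors through $A_{\alpha+1}/A_\alpha\in{}^{\perp_0}\mathbb{Z}$), and vanishes at limit stages by continuity; hence $f=0$ and $A\in{}^{\perp_0}\mathbb{Z}$.

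Next, I would invoke Maximum Deconstructibility for this $\mathcal{C}$: since ${}^{\perp_0}\mathbb{Z}$ satisfies both \ref{item_ClosedTFExt} and \ref{item_AE_quotients}, the principle asserts it is deconstructible. Finally, I would apply the forward direction of Theorem~\ref{thm_StrengthenBM}: deconstructibility of ${}^{\perp_0}\mathbb{Z}$ implies the existence of an $\omega_1$-strongly compact cardinal, which is exactly the desired conclusion.

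I do not expect a real obstacle here: Maximum Deconstructibility is formulated precisely so as to apply to proper classes such as a torsion class, and the only thing meriting explicit (but genuinely routine) verification is the first step — that ${}^{\perp_0}\mathbb{Z}$ falls within the scope of the principle, i.e., that it is closed under transfinite extensions and a fortiori eventually almost everywhere closed under quotients. Everything substantive is already done in Theorem~\ref{thm_StrengthenBM} and in the clarified Bagaria–Magidor analysis that precedes it.
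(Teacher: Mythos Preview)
Your proposal is correct and follows essentially the same route as the paper: apply Maximum Deconstructibility to ${}^{\perp_0}\mathbb{Z}$ (which satisfies \ref{item_ClosedTFExt} and \ref{item_AE_quotients} by Lemma~\ref{lem_NicePropertiesTorsionClasses}, since closure under homomorphic images gives \ref{item_AE_quotients}), conclude it is deconstructible, and then invoke Theorem~\ref{thm_StrengthenBM}. The paper's argument is identical, merely citing Lemma~\ref{lem_NicePropertiesTorsionClasses} for the closure properties rather than reproving them by hand as you do.
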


So Maximum Deconstructibility lies, implication-wise, between VP and the existence of an $\omega_1$-strongly compact cardinal.  The first author still conjectures that it is equivalent to VP.

Section \ref{sec_Prelims} has preliminaries.  Section \ref{sec_ApproxZFC} proves the approximation properties of torsion classes that are provable in ZFC alone.  Section \ref{sec_Clarify_BM} discusses some results of Bagaria-Magidor~\cite{MR3152715}.  Section \ref{sec: max decon needs large cardinals} proves the main theorems mentioned above, and Section \ref{sec_Questions} includes some questions.

\section{Preliminaries}\label{sec_Prelims}

Our notation and conventions follow Kanamori~\cite{MR1994835} (for set theory and large cardinals) and G\"obel-Trlifaj~\cite{MR2985554} (for module theory).  By \emph{ring} we will mean a unital and not necessarily commutative ring.  If $R$ is a ring, the class of left $R$-modules will be denoted by $R$-Mod.  We will say that $M$ is a \emph{module} rather than a  left $R$-module whenever $R$ is clear from the context. For a regular cardinal $\kappa$ and a module $M$, the collection of all submodules $N$ of $M$ that are ${<}\kappa$-generated will be denoted by $[M]^{<\kappa}$; if $|R|<\kappa$ and $\kappa$ is regular and uncountable, then ``${<}\kappa$-generated" is equivalent to ``of cardinality less than $\kappa$".  A cardinal $\kappa$ is \emph{strongly compact} if every $\kappa$-complete filter (on any set whatsoever) can be extended to a $\kappa$-complete ultrafilter.   Bagaria and Magidor considered a weakening of strong compactness:
\begin{definition}[Bagaria and Magidor, {\cite{MR3152715}}]\label{def: omega1strongly}
    An uncountable cardinal $\kappa$ is called \emph{$\omega_1$-strongly compact} if every $\kappa$-complete filter extends to an $\omega_1$-complete ultrafilter.
\end{definition}
Note that if $\kappa$ is $\omega_1$-strong compact then so is any cardinal $\lambda\geq \kappa$.  Unlike strongly compact cardinals, an $\omega_1$-strong compact cardinal  may fail to be regular \cite[\S6]{MR3152715}.

\section{Approximation properties and torsion classes}\label{sec_ApproxZFC}

Given a class $\mathcal{K}$ of $R$-modules, a \emph{$\mathcal{K}$-filtration} is a $\subseteq$-increasing and $\subseteq$-continuous sequence $\langle M_\xi\mid \xi<\eta\rangle$ of modules such that $M_0=0$ and for all $\xi<\eta$ such that $\xi+1<\eta$, $M_\xi$ is a submodule of $M_{\xi+1}$ and $M_{\xi+1}/M_\xi$ is isomorphic to a member of $\mathcal{K}$. A module $M$ is \emph{$\mathcal{K}$-filtered} whenever there is a $\mathcal{K}$-filtration $\langle M_\xi\mid \xi<\eta\rangle$ whose union is $M$. We shall denote the class of all $\mathcal{K}$-filtered modules by $\mathrm{Filt}(\mathcal{K})$. $\mathcal{K}$ is  \emph{closed under transfinite extensions} whenever $\mathrm{Filt}(\mathcal{K})\subseteq\mathcal{K}$.  Finally, 
$\mathcal{K}^{<\kappa}$ denotes the class of all ${<}\kappa$-presented members of $\mathcal{K}$.

\begin{definition}\label{def: decomposability}
   Let  $\mathcal{K}$ be a class of modules and $\lambda\leq \kappa$ be regular cardinals.
    \begin{enumerate}
        \item   $\mathcal{K}$ is \emph{$\kappa$-deconstructible} whenever $\mathcal{K} = \text{Filt}\left( {\mathcal{K}}^{<\kappa} \right)$; equivalently, $\mathcal{K}$ is closed under transfinite extensions, and  every $M\in\mathcal{K}$ admits a ${\mathcal{K}}^{<\kappa}$-filtration.\footnote{This is the definition of deconstructibility in most newer references, such as \cite{sar_trl_DeconAEC}, since it is the version that (by \cite{MR2822215}) implies the precovering property.  Some other sources (e.g. Cox~\cite{Cox_MaxDecon}, G\"obel-Trlifaj~\cite{MR2985554}) only require that $\mathcal{K} \subseteq \text{Filt}\left( {\mathcal{K}}^{<\kappa} \right)$ (i.e., with $\subseteq$ rather than equality).}

        \item $\mathcal{K}$ is \emph{$(\kappa,\lambda)$-cofinal} if  $\mathcal{K}\cap [M]^{<\kappa}$ is $\s$-cofinal in $[M]^{<\lambda}$ for every module $M\in\mathcal{K}$ (i.e., whenever every $N\in [M]^{<\lambda}$ is contained in some ${<}\kappa$-generated submodule of $M$ that lies in $\mathcal{K}$).
        \item $\mathcal{K}$ is \emph{bounded by $\kappa$} whenever $M=\sum \left( \mathcal{K}\cap [M]^{<\kappa} \right)$ for all $M\in \mathcal{K}$ (i.e., if every $x \in M$ is contained in some ${<}\kappa$-generated submodule of $M$ that lies in $\mathcal{K}$).\footnote{This property was introduced by Gardner in \cite{MR0647168} and later investigated by Dugas in \cite{MR0799895}.}
         \item  $\mathcal{K}$ is  \emph{$\kappa$-decomposable} if  every module in $\mathcal{K}$ is a direct sum of ${<}\kappa$-presented modules from $\mathcal{K}$.
    \end{enumerate}
   $\mathcal{K}$ is said to be \emph{deconstructible} provided  it is $\kappa$-deconstructible for a regular cardinal $\kappa$. The same convention is applied to the rest of above-mentioned properties.
\end{definition}
Clearly any $(\kappa,\lambda)$-cofinal class is bounded by $\kappa$. In the forthcoming Lemma~\ref{lemma: key lemma} we will argue that for certain classes $\mathcal{K}$,  ``bounded by $\kappa$" and $``(\kappa,\kappa)$-cofinal" are equivalent concepts.

The following definitions are due to Enochs (generalizing earlier work of Auslander):
\begin{definition}
Let $\mathcal{K}$ be a class of modules. We say that $\mathcal{K}$ is:
\begin{enumerate}
    \item \emph{Precovering}: If every module $M$ posseses a \emph{$\mathcal{K}$-precover}; namely, a morphism $f\colon C\rightarrow M$ with $C\in \mathcal{K}$ such that $\mathrm{Hom}(D,f)$ is surjective for all $D\in\mathcal{K}.$ 
    \item \emph{Covering}: If every module $M$ posseses a \emph{$\mathcal{K}$-cover}; namely, a $\mathcal{K}$-precover $f\colon C\rightarrow M$ such that for each  $g\in\mathrm{End}(C)$ if $fg=f$ then $g$ is an automorphism of $C$.
\end{enumerate}

\end{definition}

Saor\'{\i}n and \v{S}\v{t}ov\'{\i}\v{c}ek ~\cite{MR2822215} proved that if a class $\mathcal{K}$ (of, say, modules) is deconstructible, then it is precovering.  In \S\ref{sec: max decon needs large cardinals}  we argue that there are nicely-behaved covering classes which are yet, at least consistently, not deconstructible. The concrete example we have in mind is the torsion class of the abelian group $\mathbb{Z}$ in a context where the set-theoretic universe does not have any $\omega_1$-strongly compact cardinals (see Definition~\ref{def: omega1strongly}). 

Dickson~\cite{MR0191935} introduced the concept of \emph{torsion pairs}, which are pairs $(\mathcal{A},\mathcal{B})$ such that
\[
\mathcal{A} = {}^{\perp_0} \mathcal{B}:= \{ X \ : \ \text{Hom}(X,B)=0 \text{ for all } B \in \mathcal{B} \}
\]
and
\[
\mathcal{B} = \mathcal{A}^{\perp_0}:= \{ Y \ : \ \text{Hom}(A,Y)=0 \text{ for all } A \in \mathcal{A} \}.  
\]
A class of modules is called a \emph{torsion class} if it is of the form ${}^{\perp_0} \mathcal{B}$ for some class $\mathcal{B}$; notice that in this situation, 
\[
\left(  {}^{\perp_0} \mathcal{B}, \big( {}^{\perp_0} \mathcal{B} \big)^{\perp_0}  \right)
\]
is easily seen to be a torsion pair (the torsion pair \emph{cogenerated by} $\mathcal{B}$).  So torsion classes are exactly those classes that are the left part of some torsion pair.  Whenever $\mathcal{X}$ is a singleton $\{ X \}$, the convention is to write ${}^{\perp_0} X$ rather than ${}^{\perp_0}\{X\}$.

By \cite{MR0191935}, torsion classes are exactly those classes that are closed under arbitrary direct sums, extensions, and homomorphic images (\cite{MR0389953}, Proposition VI.2.1).  They have many other desirable features; we list the ones that are relevant for this paper:

\begin{lemma}\label{lem_NicePropertiesTorsionClasses}
Torsion classes are:
\begin{enumerate}
    \item\label{item_ClosedUnderImages} closed under homomorphic images

    \item\label{item_ClosedUnderColimits} closed under colimits
    \item\label{item_ClosedUnderTransExt} closed under transfinite extensions
    \item\label{item_TorsionClassesCovering} covering classes.
\end{enumerate}
\end{lemma}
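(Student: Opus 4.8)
The plan is to read off parts \ref{item_ClosedUnderImages}--\ref{item_ClosedUnderTransExt} from Dickson's characterization recalled above — a torsion class $\mathcal{K}={}^{\perp_0}\mathcal{B}$ is exactly a class closed under arbitrary direct sums, extensions, and homomorphic images — and then to handle the covering property \ref{item_TorsionClassesCovering} by the torsion-submodule construction.

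Part \ref{item_ClosedUnderImages} is literally one of the three closure properties; one can also argue directly that if $\pi\colon A\twoheadrightarrow A'$ with $A\in{}^{\perp_0}\mathcal{B}$, then any $g\colon A'\to B$ with $B\in\mathcal{B}$ satisfies $g\pi=0$, whence $g=0$ as $\pi$ is epic. For part \ref{item_ClosedUnderColimits} I would use that in $R$-Mod every colimit of a diagram $(M_i)_{i\in I}$ is a homomorphic image of the coproduct $\bigoplus_{i\in I}M_i$; if all $M_i\in\mathcal{K}$ then $\bigoplus_i M_i\in\mathcal{K}$ by closure under direct sums, hence $\varinjlim M_i\in\mathcal{K}$ by closure under homomorphic images. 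Part \ref{item_ClosedUnderTransExt} then follows by transfinite induction along a $\mathcal{K}$-filtration $\langle M_\xi\mid\xi<\eta\rangle$ of a $\mathcal{K}$-filtered $M$: at a successor $\xi+1$, $M_{\xi+1}$ is an extension of $M_{\xi+1}/M_\xi\in\mathcal{K}$ by $M_\xi\in\mathcal{K}$, so $M_{\xi+1}\in\mathcal{K}$; at a limit $\xi$, $M_\xi=\varinjlim_{\zeta<\xi}M_\zeta\in\mathcal{K}$ by part \ref{item_ClosedUnderColimits}; and $M=\bigcup_{\xi<\eta}M_\xi$ is again such a colimit. (Alternatively, a direct argument: for $f\colon M\to B$ with $B\in\mathcal{B}$, show $f\restriction M_\xi=0$ by induction, using at successor steps that $f$ induces a map $M_{\xi+1}/M_\xi\to B$ which must vanish.)

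For part \ref{item_TorsionClassesCovering}, the main point is the torsion submodule: for a module $M$ put $t(M):=\sum\{N\le M: N\in\mathcal{K}\}$. Because the submodules of $M$ form a set, $t(M)$ is the image of the canonical map $\bigoplus_{N}N\to M$ ranging over the $\mathcal{K}$-submodules $N$ of $M$, whose source lies in $\mathcal{K}$; hence $t(M)\in\mathcal{K}$ by the closure properties just used. I claim the inclusion $\iota\colon t(M)\hookrightarrow M$ is a $\mathcal{K}$-cover. It is a $\mathcal{K}$-precover because for any $g\colon C\to M$ with $C\in\mathcal{K}$ the image $g(C)$ is a homomorphic image of $C$, so $g(C)\subseteq t(M)$ and $g$ factors through $\iota$. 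And it is a cover because $\iota h=\iota$ for $h\in\mathrm{End}(t(M))$ forces $h=\mathrm{id}_{t(M)}$, which is an automorphism.

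I do not expect a genuine obstacle here: the one step needing a little care is verifying that $t(M)$ is truly a set-indexed sum (so that closure under direct sums and homomorphic images legitimately places it in $\mathcal{K}$); everything else is formal.
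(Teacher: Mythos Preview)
Your proposal is correct and follows essentially the same approach as the paper: parts \ref{item_ClosedUnderImages}--\ref{item_ClosedUnderTransExt} are derived from Dickson's closure properties exactly as you do, and for \ref{item_TorsionClassesCovering} the paper uses the trace $r(M)=\sum_{T\in\mathcal{K}}\{\operatorname{im}\pi:\pi\in\operatorname{Hom}(T,M)\}$, which coincides with your $t(M)$ since images of maps from $\mathcal{K}$ are precisely the $\mathcal{K}$-submodules of $M$. Your set-indexed formulation is arguably cleaner, and you supply the (easy) verification that the inclusion is a cover, which the paper leaves to the reader.
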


\eqref{item_ClosedUnderImages} is immediate from the definition of torsion class, and  \eqref{item_ClosedUnderColimits} follows from closure under direct sums and homomorphic images.  Closure under transfinite extensions follows easily from closure under extensions and colimits.  Finally, given any module $M$ and any torsion class $\mathcal{T}:={}^{\perp_0} \mathcal{X}$, the \emph{trace of $\mathcal{T}$ in $M$} is defined as
\[
r(M):= \sum_{T \in \mathcal{T}} \{ \text{im} \  \pi \ : \ \pi \in \text{Hom}(T,M)  \}.
\]
Then it is easily seen that $r(M) \in \mathcal{T}$ and that the inclusion $r(M) \to M$ is a $\mathcal{T}$-cover of $M$.

\section{On some results of Bagaria and Magidor}\label{sec_Clarify_BM}

In Bagaria-Magidor~\cite[\S5]{MR3152715} a torsion class ${}^{\perp_0} \mathcal{X}$ is called ``$\kappa$-generated" whenever every $A \in {}^{\perp_0} \mathcal{X}$ is a \emph{direct} sum of subgroups in ${}^{\perp_0} \mathcal{X}$, each of cardinality ${<}\kappa$; this is what is usually called $\kappa$-decomposable (see Definition~\ref{def: decomposability}).  The use of the word ``direct" on page 1867 of \cite{MR3152715} appears to be a misprint,\footnote{The anonymous referee has suggested that the authors of \cite{MR3152715} might have intended to say \emph{directed sum} instead of \emph{direct sum}.} since the arguments there never use (or conclude) directness of the relevant sums. The arguments in \cite[\S5]{MR3152715}, and the citation they provide for the concept (Dugas~\cite{MR0799895}), appear to use the weaker property that every group in the class is a sum of subgroups in the class of cardinality ${<}\kappa$.  The latter condition is what we called \emph{bounded by $\kappa$} in Section \ref{sec_Prelims} (this terminology was used by Gardner~\cite{MR0647168} and Dugas~\cite{MR0799895}).  So we shall use the \emph{bounded by $\kappa$} terminology to state the Bagaria-Magidor results.  They proved:

\begin{theorem}[{\cite[Theorem 5.1]{MR3152715}}]\label{thm_BM_5_1_corrected}
If $\kappa$ is a $\delta$-strongly compact cardinal and $X$ is an abelian group of cardinality less than $\delta$, then ${}^{\perp_0} X$ is bounded by $\kappa$.  In fact, they prove the stronger property of ${}^{\perp_0} X$ being $(\kappa,\omega_1)$-cofinal.\footnote{Recall that this stands for the following property:  For each $G\in{}^{\perp_0}\mathbb{Z}$ every countable subgroup $H\leq G$ is included in a member of ${}^{\perp_0}\mathbb{Z}\cap [G]^{<\kappa}$ (see Definition~\ref{def: decomposability}). }
\end{theorem}

\begin{theorem}[{\cite[Theorem 5.3]{MR3152715}}]\label{thm_BM_5_3_corrected}
If ${}^{\perp_0} \mathbb{Z}$ is bounded in $\kappa$, then $\kappa$ is $\omega_1$-strongly compact.
\end{theorem}

\begin{corollary}[{\cite[Corollary 5.4]{MR3152715}}]\label{cor_BoundedIFFomega_1sc}
${}^{\perp_0} \mathbb{Z}$ is bounded in $\kappa$ if and only if $\kappa$ is $\omega_1$-strongly compact.    
\end{corollary}

The following ZFC theorem (due to the third author) shows that, while ${}^{\perp_0} \mathbb{Z}$ can be bounded (by the Bagaria-Magidor results), it can never be decomposable:
\begin{theorem}\label{thm_TorsionNotDecomp}
${}^{\perp_0} \mathbb{Z}$ is not decomposable.
\end{theorem}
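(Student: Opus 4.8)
The plan is to refute $\kappa$-decomposability for \emph{every} regular cardinal $\kappa$ simultaneously, by producing for each such $\kappa$ a single group $G_\kappa\in{}^{\perp_0}\mathbb{Z}$ of cardinality ${\geq}\kappa$ that is not a direct sum of ${<}\kappa$-presented members of ${}^{\perp_0}\mathbb{Z}$. For $G_\kappa$ I would take a large group that is complete and Hausdorff in the $p$-adic topology (for a fixed prime $p$) --- concretely $G_\kappa:=\mathbb{Z}_p^{\kappa}$, the product of $\kappa$ copies of the $p$-adic integers, though the $p$-adic completion of a free group of rank $\kappa$ would serve equally well. Everything rests on two facts: (i) a $p$-adically complete Hausdorff abelian group has trivial $\mathbb{Z}$-dual, so $G_\kappa\in{}^{\perp_0}\mathbb{Z}$; and (ii) direct summands of $p$-adically complete Hausdorff groups are again $p$-adically complete and Hausdorff, whereas an infinite direct sum of nonzero torsion-free groups that are Hausdorff in the $p$-adic topology is never $p$-adically complete.

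For (i): a nonzero $f\colon G_\kappa\to\mathbb{Z}$ has image a nonzero subgroup of $\mathbb{Z}$, which is free of rank one and hence projective, so the epimorphism $G_\kappa\twoheadrightarrow\operatorname{im}f$ splits; as $\operatorname{im}f\cong\mathbb{Z}$, this exhibits $\mathbb{Z}$ as a direct summand of $G_\kappa$, contradicting the summand half of (ii), since $\mathbb{Z}$ is not $p$-adically complete (its completion is $\mathbb{Z}_p$). For that summand statement, if $G=B\oplus C$ one checks $B\cap p^nG=p^nB$ for all $n$, so the subspace topology on $B$ \emph{is} its $p$-adic topology; Hausdorffness then transfers to $B$ at once, and completeness transfers because the projection $G\to B$ is continuous and carries a Cauchy sequence in $B$ --- which converges in $G$ --- to itself. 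For the ``infinite direct sum'' statement, write $G=\bigoplus_{n<\omega}A_n$ with each $A_n$ nonzero, torsion-free, and Hausdorff in the $p$-adic topology, pick $0\neq a_n\in A_n$, and note that the partial sums $s_N:=\sum_{n\leq N}p^na_n$ form a $p$-adically Cauchy sequence whose only possible limit $\sum_{n<\omega}p^na_n$ has infinitely many nonzero coordinates --- here torsion-freeness is essential, and the hypothesis really is needed, since $\bigoplus_{\omega}\mathbb{Z}/p$ \emph{is} $p$-adically complete --- and therefore does not lie in $\bigoplus_{n<\omega}A_n$; so $G$ is incomplete.

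To finish, suppose ${}^{\perp_0}\mathbb{Z}$ is $\kappa$-decomposable with $\kappa$ regular. The case $\kappa=\omega$ is disposed of directly: a finitely generated abelian group $\mathbb{Z}^m\oplus T$ lies in ${}^{\perp_0}\mathbb{Z}$ exactly when $m=0$, i.e.\ exactly when it is finite, so $\omega$-decomposability would force every member of ${}^{\perp_0}\mathbb{Z}$ to be torsion, contradicting $\mathbb{Q}\in{}^{\perp_0}\mathbb{Z}$. So $\kappa$ is uncountable and ``${<}\kappa$-presented'' simply means ``of cardinality ${<}\kappa$''. Now $G:=\mathbb{Z}_p^{\kappa}$ is torsion-free, $p$-adically complete and Hausdorff, and of cardinality $2^\kappa$, so $G\in{}^{\perp_0}\mathbb{Z}$ by (i), and decomposability gives $G=\bigoplus_{i\in I}A_i$ with each $A_i\in{}^{\perp_0}\mathbb{Z}$ nonzero of cardinality ${<}\kappa$. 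Since a finite direct sum of groups of cardinality ${<}\kappa$ still has cardinality ${<}\kappa$, the set $I$ is infinite; fixing countably many distinct indices $i_n$, the subsum $\bigoplus_{n<\omega}A_{i_n}$ is a direct summand of $G$, hence $p$-adically complete and Hausdorff by (ii); but it is an infinite direct sum of nonzero torsion-free groups that are Hausdorff in the $p$-adic topology (being subgroups of $G$), so by (ii) it is \emph{not} $p$-adically complete. Contradiction.

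I do not anticipate a real obstacle: once $p$-adic completeness is identified as the right invariant the argument is entirely elementary (one could instead phrase it via reduced algebraically compact groups, or cite G\"obel--Trlifaj-type realization theorems yielding arbitrarily large indecomposable groups with vanishing $\mathbb{Z}$-dual, but the $p$-adic-completion route is self-contained and exhibits the witnesses explicitly). The two points deserving care are the cardinal bookkeeping that forces $I$ to be infinite --- including the separate handling of $\kappa=\omega$ --- and the fact that ``an infinite direct sum is incomplete'' fails for torsion groups, which is exactly why the witnesses are built from $\mathbb{Z}_p$ rather than from an arbitrary $p$-complete group.
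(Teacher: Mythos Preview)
Your proof is correct but takes a genuinely different route from the paper's. The paper works entirely within torsion groups: it observes that all $p$-groups lie in ${}^{\perp_0}\mathbb{Z}$, so $\kappa$-decomposability would force every $p$-group to be a direct sum of $<\kappa$-sized subgroups; it then invokes Walker's construction of a $p$-group $P_{\kappa^+}$ whose $p$-length (Ulm length) is exactly $\kappa^+ + 1$, and observes that the $p$-length of a direct sum is bounded by the supremum of the $p$-lengths of the summands, giving an immediate contradiction. Your argument instead works on the torsion-free side, using $p$-adic completeness of $\mathbb{Z}_p^{\kappa}$ as the obstruction: a countably infinite subsum of a putative decomposition would be a summand (hence $p$-adically complete) and simultaneously an infinite direct sum of nonzero torsion-free Hausdorff groups (hence not $p$-adically complete). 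Your approach is more self-contained---it needs nothing beyond the $p$-adics and elementary topology, whereas the paper imports Walker's construction---and the invariant you use is arguably more familiar than Ulm length. On the other hand, the paper's argument is shorter once that construction is granted, and it yields a slightly sharper byproduct: already the subclass of $p$-groups (for any fixed prime) fails to be $\kappa$-decomposable for any $\kappa$.
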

\begin{proof}
Suppose toward a contradiction that ${}^{\perp_0} \mathbb{Z}$ is $\kappa$-decomposable, with $\kappa$ (without loss of generality) a regular uncountable cardinal.  Fix any prime $p$.  Then all $p$-groups\footnote{Abelian groups whose elements all have order a power of $p$.} are in ${}^{\perp_0} \mathbb{Z}$.  So, in particular, the $\kappa$-decomposability assumption of ${}^{\perp_0} \mathbb{Z}$ implies
\begin{equation}\label{eq_p_group_decomp}
    \text{Every } p \text{-group is a direct sum of } < \kappa \text{-sized subgroups.}
\end{equation}

For a $p$-group $G$, we can consider its $p$-length, which is the least ordinal $\sigma$ such that $G_\sigma = G_{\sigma+1}$, where $G_\sigma$ is defined recursively by $G_0:=G$, $G_{\sigma+1} = \{ p g \ : \ g \in G_\sigma \}$, and $G_\sigma = \bigcap_{\xi < \sigma} G_\xi$ for limit ordinals $\sigma$.  By a construction of Walker~\cite{MR0364497} (see also Bazzoni-\v{S}\v{t}ov\'{\i}\v{c}ek~\cite{MR3011880}), there exists a $p$-group, denoted $P_{\kappa^+}$, whose $p$-length is exactly $\kappa^+ +1$.\footnote{The group is indexed by strictly decreasing finite sequences of ordinals $\kappa^+ > \beta_1 > . . . > \beta_k$, with relations $p(\kappa^+\beta_1 . . . \beta_{k+1}) =
\kappa^+\beta_1 . . . \beta_k$ and $p(\kappa^+) = 0$.}  By \eqref{eq_p_group_decomp}, 
\[
P_{\kappa^+} = \bigoplus_{i \in I} Q_i
\]
for some collection $(Q_i)_{i \in I}$ of $<\kappa$-sized subgroups. 
 Since subgroups of $p$-groups are also $p$-groups, each $Q_i$ has a $p$-length, which is $<\kappa$ because $|Q_i|<\kappa$.  And it is easy to check that the $p$-length of a direct sum is at most the supremum of the $p$-lengths of the direct summands, so $\bigoplus_{i \in I} Q_i$ has $p$-length at most $\kappa$, contradicting that the $p$-length of $P_{\kappa^+}$ is $\kappa^+ + 1$.
\end{proof}

\section{Maximum Deconstructibility, and deconstructibility of torsion classes, require large cardinals}\label{sec: max decon needs large cardinals}

Through this section $\kappa$ will denote a  regular uncountable cardinal. Our main goal is to clarify, for a given class of modules $\mathcal{K}$, the  relationship between $\mathcal{K}$ being $\kappa$-deconstructible and $\mathcal{K}$ being bounded in $\kappa$. As noted in \S\ref{sec_Prelims}, any $(\kappa,\kappa)$-cofinal class $\mathcal{K}$ is bounded in $\kappa$.  In fact the former property seems to be  strictly stronger than the latter, and they both follow from $\kappa$-deconstructibility.  Lemma \ref{lemma: key lemma} provides some important scenarios where these concepts are equivalent.  Combining this lemma with the  results of Bagaria and Magidor from Section \ref{sec_Clarify_BM} allows us to prove the main results mentioned in the introduction, namely:

\begin{enumerate}
    \item  that the \emph{Maximum Deconstructibility} principle introduced in \cite{Cox_MaxDecon} entails the existence of large cardinals;

    \item that in the absence of $\omega_1$-strongly compact cardinals, the class ${}^{\perp_0} \mathbb{Z}$ is a covering class (cf. Lemma \ref{lem_NicePropertiesTorsionClasses} above) that is \emph{not} deconstructible.

\end{enumerate}

\begin{lemma}\label{lemma: key lemma}
  Suppose $\kappa$ is a regular uncountable cardinal and $R$ is a ring of size less than $\kappa$, and $\mathcal{K}$ is a class of $R$-modules. Consider the following statements:
    \begin{enumerate}[label=(\alph*)]
        \item\label{item_K_kappa_decon} $\mathcal{K}$ is $\kappa$-deconstructible.
        \item\label{item_KappaCofinal} $\mathcal{K}$ is $(\kappa,\kappa)$-cofinal and closed under transfinite extensions.

        \item\label{item_KappaBounded} $\mathcal{K}$ is bounded by $\kappa$ and closed under transfinite extensions.
        
    \end{enumerate}

Then:
\begin{enumerate}
    \item \ref{item_K_kappa_decon} $\implies$ \ref{item_KappaCofinal} $\implies$ \ref{item_KappaBounded}.

    \item\label{item_AdditionalAssumeQuot} If $\mathcal{K}$ is closed under quotients---i.e., if 
    \[
     \big(A \subset B, \ A \in \mathcal{K}, \text{ and } B \in \mathcal{K} \big) \implies \ B/A \in \mathcal{K}
    \]
    for all modules $A$ and $B$---then \ref{item_K_kappa_decon} and \ref{item_KappaCofinal} are equivalent.  

   \item If $\mathcal{K}$ is closed under homomorphic images and colimits, then all three statements are equivalent.
\end{enumerate}
\end{lemma}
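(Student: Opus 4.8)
The plan is to prove the cycle \ref{item_K_kappa_decon} $\implies$ \ref{item_KappaCofinal} $\implies$ \ref{item_KappaBounded} unconditionally, and then to close it back up to \ref{item_K_kappa_decon} under the successively milder extra hypotheses of parts~(2) and~(3). The standing assumptions ($|R|<\kappa$, $\kappa$ regular uncountable) enter throughout via the equivalence, for submodules of a fixed module, of the notions ``${<}\kappa$-generated'', ``of cardinality ${<}\kappa$'' and ``${<}\kappa$-presented''; this is what lets the successive quotients of a filtration be recognized as members of $\mathcal{K}^{<\kappa}$. For \ref{item_KappaCofinal} $\implies$ \ref{item_KappaBounded}: closure under transfinite extensions is shared, and for $M\in\mathcal{K}$ each cyclic submodule $Rx$ lies in $[M]^{<\kappa}$, hence by $(\kappa,\kappa)$-cofinality inside some member of $\mathcal{K}\cap[M]^{<\kappa}$, so $M=\sum\bigl(\mathcal{K}\cap[M]^{<\kappa}\bigr)$. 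For \ref{item_K_kappa_decon} $\implies$ \ref{item_KappaCofinal}: closure under transfinite extensions is built into $\kappa$-deconstructibility, and for the cofinality clause I would invoke Hill's Lemma (G\"obel--Trlifaj~\cite{MR2985554}), applied to a set of isomorphism representatives of $\mathcal{K}^{<\kappa}$: for $M\in\mathcal{K}$ and any ${<}\kappa$-generated $N\le M$ it produces $L$ with $N\le L\le M$, $|L|<\kappa$, and $L$ itself $\mathcal{K}^{<\kappa}$-filtered, so $L\in\mathrm{Filt}(\mathcal{K}^{<\kappa})=\mathcal{K}$.

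Part~(2): assuming $\mathcal{K}$ is closed under quotients, I must prove \ref{item_KappaCofinal} $\implies$ \ref{item_K_kappa_decon}, i.e.\ that every $M\in\mathcal{K}$ admits a $\mathcal{K}^{<\kappa}$-filtration (closure under transfinite extensions already gives $\mathrm{Filt}(\mathcal{K}^{<\kappa})\subseteq\mathcal{K}$). I would fix a surjective enumeration $M=\{m_\alpha \mid \alpha<\mu\}$ and build a continuous $\subseteq$-increasing chain $\langle M_\xi\rangle$ with $M_0=0$ by recursion: at a successor stage with $M_\xi\subsetneq M$, closure under quotients gives $M/M_\xi\in\mathcal{K}$, and applying $(\kappa,\kappa)$-cofinality to $M/M_\xi$ places the image of the least $m_\alpha\notin M_\xi$ inside a single $\overline{L}\in\mathcal{K}\cap[M/M_\xi]^{<\kappa}$; then I let $M_{\xi+1}$ be the preimage of $\overline{L}$, so $M_{\xi+1}/M_\xi\cong\overline{L}\in\mathcal{K}^{<\kappa}$ and $M_{\xi+1}\in\mathcal{K}$ by closure under (two-step, hence transfinite) extensions, while at limits $M_\xi=\bigcup_{\zeta<\xi}M_\zeta\in\mathcal{K}$ by closure under transfinite extensions. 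Since the least index of a missing $m_\alpha$ strictly increases at successor stages, the chain reaches $M$, giving the required filtration. Note that here it is genuinely $(\kappa,\kappa)$-cofinality, not mere boundedness, that is used: under quotient closure alone one cannot recombine finitely many ${<}\kappa$-generated pieces of $M/M_\xi$ into a member of $\mathcal{K}$.

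Part~(3): a quotient $B/A$ is a homomorphic image of $B$, so closure under homomorphic images implies closure under quotients, and part~(2) already gives \ref{item_K_kappa_decon} $\iff$ \ref{item_KappaCofinal}; it remains to upgrade \ref{item_KappaBounded} to \ref{item_KappaCofinal}, and this is exactly where closure under colimits is used. Given $M\in\mathcal{K}$ and $N\in[M]^{<\kappa}$, boundedness of $\mathcal{K}$ lets me choose $\{L_s \mid s\in S\}\subseteq\mathcal{K}\cap[M]^{<\kappa}$ with $|S|<\kappa$ and $N\subseteq\sum_{s\in S}L_s$; then $\bigoplus_{s\in S}L_s$ is a colimit of members of $\mathcal{K}$, hence lies in $\mathcal{K}$, and the summation map $\bigoplus_{s\in S}L_s\to M$ exhibits $L:=\sum_{s\in S}L_s\le M$ as a homomorphic image of a member of $\mathcal{K}$, so $L\in\mathcal{K}$; since $\kappa$ is regular, $L\in[M]^{<\kappa}$, and $N\subseteq L$, so $\mathcal{K}\cap[M]^{<\kappa}$ is $\subseteq$-cofinal in $[M]^{<\kappa}$. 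I expect the main obstacle to be the recursion of part~(2)---keeping the chain continuous, verifying that the quotients really land in $\mathcal{K}^{<\kappa}$ (precisely where $|R|<\kappa$ is needed), and the index bookkeeping forcing termination. By contrast the coproduct observation in part~(3) is one line, and part~(1) is a routine appeal to Hill's Lemma.
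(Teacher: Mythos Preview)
Your proof is correct. Parts~(1) and~(3) match the paper's argument essentially verbatim (Hill's Lemma for \ref{item_K_kappa_decon}$\Rightarrow$\ref{item_KappaCofinal}; and for \ref{item_KappaBounded}$\Rightarrow$\ref{item_KappaCofinal} the paper uses the colimit of the inclusion diagram among the $Y_x$'s where you use the coproduct $\bigoplus L_s$, but this is cosmetic---both are colimits in the class, and the sum is a homomorphic image of either).

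The genuine difference is in Part~(2). The paper does not build a $\mathcal{K}^{<\kappa}$-filtration directly; instead it quotes \cite[Theorem~6.3]{Cox_MaxDecon}, which reduces $\kappa$-deconstructibility (under closure under transfinite extensions and quotients) to showing that $\mathcal{K}\cap\wp^*_\kappa(M)$ is stationary in $\wp^*_\kappa(M)$ for each $M\in\mathcal{K}$. It then verifies stationarity by an elementary-submodel argument: given an algebra $\mathfrak{A}=(M,\dots)$, alternate L\"owenheim--Skolem steps $X_n\prec\mathfrak{A}$ with $(\kappa,\kappa)$-cofinality steps $C_n\in\mathcal{K}\cap[M]^{<\kappa}$, and observe that the union $X=\bigcup X_n=\bigcup C_n$ lies in $\mathcal{K}$ because the successive quotients $C_{n+1}/C_n$ are in $\mathcal{K}$ by quotient closure. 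Your approach is more elementary and fully self-contained: you pass to the quotient $M/M_\xi\in\mathcal{K}$ at each successor stage and apply $(\kappa,\kappa)$-cofinality \emph{there} to pick the next layer, rather than working inside $M$ with a stationarity criterion. The paper's route is shorter if one is willing to import the cited black box, and its $\omega$-chain also shows a bit more (stationarity in $\wp^*_\kappa(M)$, not just existence of one filtration); your route avoids the external reference and makes the bookkeeping you flagged (continuity, $|R|<\kappa$ for the quotients landing in $\mathcal{K}^{<\kappa}$, termination via the least missing index) completely explicit.
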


\begin{proof}
The \ref{item_K_kappa_decon} $\Rightarrow$ \ref{item_KappaCofinal} direction follows from the Hill Lemma (cf.\ G\"obel-Trlifaj~\cite{MR2985554}, Theorem 7.10).  The \ref{item_KappaCofinal} $\Rightarrow$ \ref{item_KappaBounded} implication is immediate from the definitions.
    
Now suppose $\mathcal{K}$ is closed under transfinite extensions and quotients.  We prove \ref{item_KappaCofinal} $\Rightarrow$ \ref{item_K_kappa_decon}.  Without losing any generality we may assume that (the universe of) every module $M\in\mathcal{K}$ is a cardinal. Since $\mathcal{K}$ is closed both under transfinite extensions and  quotients, and $|R|<\kappa$, then by \cite[Theorem~6.3]{Cox_MaxDecon} in order to prove $\kappa$-deconstructibility of $\mathcal{K}$ it suffices to show that if $M \in \mathcal{K}$, then $\mathcal{K} \cap \wp^*_\kappa(M)$ is stationary in $\wp^*_\kappa(M)$, where $\wp^*_\kappa(M)$ denotes the ${<}\kappa$-sized submodules of $M$ whose intersection with $\kappa$ is transitive (i.e., those $<\kappa$-sized submodules $N$ of $M$ such that $N\cap \kappa\in \kappa$).

    The set $\mathcal{K}\cap \wp^*_\kappa(M)$ being stationary in $\wp^*_\kappa(M)$ is equivalent to the following statement (\cite{MattHandbook}):  for each function $F$ from finite subsets of $M$ into $M$, there is an $X \in \mathcal{K}\cap \wp^*_\kappa(M)$ such that $X$ is closed under the function $F$.  We prove this next.

    Fix an arbitrary function $F$ from \emph{finite} subsets of $M$ into $M$; notice that since $F$ is finitary, for any infinite $D \subset M$, the closure of $D$ under the function $F$ has the same cardinality as $D$.  Using the assumption that $\mathcal{K}$ is $\subseteq$-cofinal in $[M]^{<\kappa}$, fix some $C_0 \in \mathcal{K} \cap [M]^{<\kappa}$.  Recursively build a $\subseteq$-increasing $\omega$-chain
    \[
    C_0 \subseteq X_0 \subseteq C_1 \subseteq X_1 \subseteq C_2 \subseteq X_2 \subseteq \dots
    \]
    such that for each $n \in \omega$:
    \begin{itemize}
        \item $X_n$ is the closure of $C_n \cup \text{sup}(C_n \cap \kappa)$ under the function $F$.  This closure has cardinality $<\kappa$, because $|C_n|<\kappa$ and hence (by regularity of $\kappa$)  $|C_n \cup \text{sup}(C_n \cap \kappa)| < \kappa$.

        \item (for $n \ge 1$) $C_n \in \mathcal{K}$, $C_n \supseteq X_{n-1}$, and $|C_n|<\kappa$.  This is possible by the assumption that $\mathcal{K}$ is $\subseteq$-cofinal in $[M]^{<\kappa}$ (but note that $C_n \cap \kappa$ might fail to be transitive).
    \end{itemize}
    Let $X:= \bigcup_{n<\omega} X_{n}$.  Then $X$ is closed under $F$, $|X|<\kappa$, and $X \cap \kappa=\text{sup}_{n < \omega} \text{sup}(C_n \cap \kappa)$; in particular, $X \cap \kappa$ is transitive. Also, note that $X = \bigcup_{n<\omega} C_n$.  Since $\mathcal{K}$ is closed under  quotients, each $C_{n+1}/C_n$ is in $\mathcal{K}$.   So since $\mathcal{K}$ is closed under transfinite extensions, and each $C_{n+1}/C_n$ is in $\mathcal{K}$, we conclude that $X \in \mathcal{K}$.

Now suppose that $\mathcal{K}$ is closed under homomorphic images and colimits; we show that \ref{item_KappaBounded} implies \ref{item_KappaCofinal} (and hence \ref{item_K_kappa_decon} will hold too, since closure under homomorphic images trivially implies closure under quotients in the sense of \ref{item_AdditionalAssumeQuot}).  Suppose $\mathcal{K}$ is bounded in $\kappa$.  Suppose $M \in \mathcal{K}$, and fix any $X \in [M]^{<\kappa}$.  By $\kappa$-boundedness of $\mathcal{K}$, for each $x \in X$ there is a $Y_x \in \mathcal{K} \cap [M]^{<\kappa}$ with $x \in Y_x$.  Then $S:=\sum_{x \in X} Y_x$ is a $<\kappa$-sized submodule of $M$ and contains $X$.  And $S$ is a homomorphic image of the colimit of the (possibly non-directed) diagram of inclusions
\[
\big\{ Y_x \to Y_z \ : \ Y_x \subseteq Y_z \text{ and } x,z \in X  \big\}.
\]
So by closure of $\mathcal{K}$ under homomorphic images and colimits, $S \in \mathcal{K}$.
\end{proof}

We now focus on abelian groups. Recall that by  Lemma~\ref{lem_NicePropertiesTorsionClasses} any torsion class ${}^{\perp_0} \mathcal{X}$ is closed under homomorphic images, transfinite extensions, and colimits.  So by Lemma \ref{lemma: key lemma}, a class of the form ${}^{\perp_0} \mathcal{X}$ is deconstructible if and only if it is bounded in some cardinal.  Furthermore, as noted in the introduction, although an $\omega_1$-strongly compact cardinal might be singular, all cardinals above it are also $\omega_1$-strongly compact.  In particular (by considering its successor if necessary), there exists an $\omega_1$-strongly compact cardinal if and only if there exists a \emph{regular} $\omega_1$-strongly compact cardinal.  Then Lemma \ref{lem_NicePropertiesTorsionClasses}, Lemma \ref{lemma: key lemma}, and the  Corollary \ref{cor_BoundedIFFomega_1sc} of Bagaria and Magidor immediately imply Theorem \ref{thm_StrengthenBM} from the introduction, which asserted that ${}^{\perp_0} \mathbb{Z}$ is deconstructible if and only if there is an $\omega_1$-strongly compact cardinal.  In fact, making use of the  Theorem \ref{thm_BM_5_1_corrected} of Bagaria and Magidor, we have:

\begin{corollary}\label{cor_MainCor}
The following are equivalent:
\begin{enumerate}
    \item  There exists an $\omega_1$-strongly compact cardinal.
    \item ${}^{\perp_0} \mathbb{Z}$ is deconstructible.
    \item For all countable abelian groups $X$, ${}^{\perp_0} X$ is deconstructible.
\end{enumerate}
\end{corollary}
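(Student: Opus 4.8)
The plan is to close the cycle of implications $(1) \Rightarrow (3) \Rightarrow (2) \Rightarrow (1)$, feeding the (clarified) Theorems~\ref{thm_BM_5_1_corrected} and \ref{thm_BM_5_3_corrected} of Bagaria--Magidor into the equivalences of Lemma~\ref{lemma: key lemma}, whose hypotheses are available because every torsion class is closed under homomorphic images, colimits, and transfinite extensions by Lemma~\ref{lem_NicePropertiesTorsionClasses}. The implication $(3) \Rightarrow (2)$ is immediate, since $\mathbb{Z}$ is itself a countable abelian group; and the equivalence $(1) \Leftrightarrow (2)$ is precisely Theorem~\ref{thm_StrengthenBM}, already derived. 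So the only genuinely new content is $(1) \Rightarrow (3)$, which is where Theorem~\ref{thm_BM_5_1_corrected} (rather than just Corollary~\ref{cor_BoundedIFFomega_1sc}) is needed.

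For $(1) \Rightarrow (3)$: I would start from an $\omega_1$-strongly compact cardinal and, using the remark preceding the corollary (replace it by its successor if it is singular), assume it is a \emph{regular} uncountable cardinal $\kappa$; it remains $\omega_1$-strongly compact, i.e.\ $\delta$-strongly compact with $\delta=\omega_1$. Given any countable abelian group $X$ we have $|X| < \omega_1 = \delta$, so Theorem~\ref{thm_BM_5_1_corrected} yields that ${}^{\perp_0} X$ is bounded in $\kappa$. Since ${}^{\perp_0}X$ is closed under homomorphic images, colimits, and transfinite extensions (Lemma~\ref{lem_NicePropertiesTorsionClasses}) and $|\mathbb{Z}| = \omega < \kappa$, Lemma~\ref{lemma: key lemma}(3) applies and upgrades boundedness in $\kappa$ to $\kappa$-deconstructibility; hence ${}^{\perp_0}X$ is deconstructible.

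For $(2) \Rightarrow (1)$: assume ${}^{\perp_0}\mathbb{Z}$ is $\kappa$-deconstructible for some regular $\kappa$. Because ${}^{\perp_0}\mathbb{Z}$ is closed under transfinite extensions, monotonicity of $\mathrm{Filt}$ gives the chain $\mathcal{K} = \mathrm{Filt}(\mathcal{K}^{<\kappa}) \subseteq \mathrm{Filt}(\mathcal{K}^{<\kappa'}) \subseteq \mathrm{Filt}(\mathcal{K}) = \mathcal{K}$ for every regular $\kappa' \ge \kappa$, so $\kappa$-deconstructibility passes upward and we may take $\kappa$ uncountable. Then Lemma~\ref{lemma: key lemma} (the implication \ref{item_K_kappa_decon} $\Rightarrow$ \ref{item_KappaBounded}) shows ${}^{\perp_0}\mathbb{Z}$ is bounded in $\kappa$, and Theorem~\ref{thm_BM_5_3_corrected} (equivalently Corollary~\ref{cor_BoundedIFFomega_1sc}) gives that $\kappa$ is $\omega_1$-strongly compact.

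I do not expect a real obstacle here: the proof is bookkeeping that threads the hypotheses of Lemma~\ref{lemma: key lemma} (regularity of $\kappa$ and the size bound $|\mathbb{Z}|<\kappa$) and of Theorem~\ref{thm_BM_5_1_corrected} (a countable group has cardinality below $\delta=\omega_1$). The one point needing a line of care is the upward absoluteness of $\kappa$-deconstructibility for classes closed under transfinite extensions, recorded in the inclusion chain above; everything else is a direct citation.
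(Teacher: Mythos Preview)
Your proposal is correct and follows essentially the same route as the paper: the paper likewise derives the equivalence by combining Lemma~\ref{lem_NicePropertiesTorsionClasses}, Lemma~\ref{lemma: key lemma}, the observation that an $\omega_1$-strongly compact cardinal may be taken regular, and the clarified Bagaria--Magidor results (Corollary~\ref{cor_BoundedIFFomega_1sc} for $(1)\Leftrightarrow(2)$ and Theorem~\ref{thm_BM_5_1_corrected} for $(1)\Rightarrow(3)$). Your write-up is simply more explicit about the upward persistence of $\kappa$-deconstructibility and the verification that $|\mathbb{Z}|<\kappa$, which the paper leaves implicit.
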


Since ${}^{\perp_0} \mathbb{Z}$ is closed under transfinite extensions and homomorphic images, we solve part of \cite[Question~8.2]{Cox_MaxDecon} in the affirmative:
\begin{corollary}\label{cor_ImprovedBagMag}
   \emph{Maximum Deconstructibility} of \cite{Cox_MaxDecon} has large cardinal consistency strength. Specifically, it lies implication-wise, in between the existence of an $\omega_1$-strongly compact and Vop\v{e}nka's Principle. 
\end{corollary}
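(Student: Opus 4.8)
The plan is to assemble the corollary from two halves. One half is already available in the literature: \cite{Cox_MaxDecon} proves that Vop\v{e}nka's Principle implies Maximum Deconstructibility, so no new work is needed in that direction. The content to be supplied is the reverse implication---Maximum Deconstructibility implies the existence of an $\omega_1$-strongly compact cardinal---together with the remark that the latter is not provable in ZFC, which is what endows Maximum Deconstructibility with genuine large-cardinal consistency strength.

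To obtain ``Maximum Deconstructibility $\Rightarrow$ there is an $\omega_1$-strongly compact cardinal'', I would first recall the statement of the principle from \cite{Cox_MaxDecon}: every class of modules that is closed under transfinite extensions and is ``eventually almost everywhere closed under quotients'' is deconstructible. Then I would check that the torsion class ${}^{\perp_0}\mathbb{Z}$ satisfies both hypotheses. By Lemma \ref{lem_NicePropertiesTorsionClasses}, ${}^{\perp_0}\mathbb{Z}$ is closed under homomorphic images and under transfinite extensions, and closure under homomorphic images trivially implies the (far weaker) ``eventually almost everywhere closed under quotients'' condition. Hence Maximum Deconstructibility applies to ${}^{\perp_0}\mathbb{Z}$ and forces it to be deconstructible.

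Now I would invoke the equivalence established earlier in this section, Theorem \ref{thm_StrengthenBM} (equivalently, Corollary \ref{cor_MainCor}): ${}^{\perp_0}\mathbb{Z}$ is deconstructible if and only if there is an $\omega_1$-strongly compact cardinal. Combining, Maximum Deconstructibility $\Rightarrow$ ${}^{\perp_0}\mathbb{Z}$ is deconstructible $\Rightarrow$ an $\omega_1$-strongly compact cardinal exists. Chaining with the implication from \cite{Cox_MaxDecon} yields
\[
\mathrm{VP} \ \Longrightarrow \ \text{Maximum Deconstructibility} \ \Longrightarrow \ \text{there is an } \omega_1\text{-strongly compact cardinal,}
\]
which is precisely the ``implication-wise in between'' assertion. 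The consistency-strength statement is then immediate, since the existence of an $\omega_1$-strongly compact cardinal is well known to be unprovable in ZFC (granting the consistency of ZFC together with appropriate large cardinals).

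I do not expect a real obstacle: the difficult content lives in Theorem \ref{thm_StrengthenBM} and in the clarification of the Bagaria--Magidor results in Section \ref{sec_Clarify_BM}, and this corollary is essentially bookkeeping on top of them. The only points to be careful about are (i) that the form of Maximum Deconstructibility used in \cite{Cox_MaxDecon} indeed applies to a class satisfying only the weak quotient-closure hypothesis (closure under homomorphic images is more than enough), and (ii) keeping the direction of the implications straight, so that ``in between'' is read correctly---VP as the stronger hypothesis, the existence of an $\omega_1$-strongly compact cardinal as the weaker consequence.
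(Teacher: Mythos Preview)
Your proposal is correct and matches the paper's approach exactly: the paper simply observes that ${}^{\perp_0}\mathbb{Z}$ is closed under transfinite extensions and homomorphic images (hence falls under the hypotheses of Maximum Deconstructibility), so Maximum Deconstructibility forces ${}^{\perp_0}\mathbb{Z}$ to be deconstructible, which by Corollary~\ref{cor_MainCor} yields an $\omega_1$-strongly compact cardinal, while the upper bound VP $\Rightarrow$ Maximum Deconstructibility is cited from \cite{Cox_MaxDecon}.
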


Recall that  deconstructible classes of modules are always precovering (\cite[Theorem~7.21]{MR2985554}). Thus, another interesting corollary of Corollary \ref{cor_MainCor} and Lemma~\ref{lem_NicePropertiesTorsionClasses} is the existence of precovering classes (even covering classes) which are not deconstructible:
\begin{corollary}
If there are no $\omega_1$-strongly compact cardinals, then there is a covering class in \textbf{Ab} that is closed under colimits, transfinite extensions, and homomorphic images, but is not deconstructible (namely, the class ${}^{\perp_0} \mathbb{Z}$). Thus, $\mathrm{ZFC}$ cannot prove that all subclasses of \textbf{Ab} satisfying the clauses in Lemma \ref{lem_NicePropertiesTorsionClasses} are deconstructible.\qed
\end{corollary}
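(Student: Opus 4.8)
The plan is to read this off directly from the two results it cites; there is no new mathematics to do. First I would note that Lemma~\ref{lem_NicePropertiesTorsionClasses} already establishes, in ZFC, that the torsion class ${}^{\perp_0}\mathbb{Z}$ is a covering class and is closed under homomorphic images, under colimits, and under transfinite extensions. Thus ${}^{\perp_0}\mathbb{Z}$ satisfies every structural hypothesis appearing in the statement, with no set-theoretic assumption whatsoever; the only remaining point is its failure of deconstructibility.

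For that, I would invoke the equivalence (1)$\Leftrightarrow$(2) of Corollary~\ref{cor_MainCor}: ${}^{\perp_0}\mathbb{Z}$ is deconstructible if and only if an $\omega_1$-strongly compact cardinal exists. Hence if there are no $\omega_1$-strongly compact cardinals then clause (1) fails, so clause (2) fails, that is, ${}^{\perp_0}\mathbb{Z}$ is not deconstructible. Combined with the previous paragraph, ${}^{\perp_0}\mathbb{Z}$ is the promised witness, and the first sentence of the statement follows.

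The second sentence is a meta-mathematical consequence. Suppose, toward a contradiction, that ZFC proved that every subclass of \textbf{Ab} satisfying the four clauses of Lemma~\ref{lem_NicePropertiesTorsionClasses} is deconstructible. Since ZFC proves that ${}^{\perp_0}\mathbb{Z}$ satisfies those clauses (again Lemma~\ref{lem_NicePropertiesTorsionClasses}), ZFC would then prove that ${}^{\perp_0}\mathbb{Z}$ is deconstructible, and hence --- via the ZFC-provable implication (2)$\Rightarrow$(1) of Corollary~\ref{cor_MainCor} --- ZFC would prove the existence of an $\omega_1$-strongly compact cardinal. But an $\omega_1$-strongly compact $\kappa$ yields a nonprincipal $\omega_1$-complete ultrafilter on $\kappa$ (extend the $\kappa$-complete filter of tail subsets of $\kappa$), hence a measurable cardinal and in particular $\mathrm{Con}(\mathrm{ZFC})$; so by G\"odel's second incompleteness theorem ZFC cannot prove the existence of such a cardinal, unless ZFC is itself inconsistent. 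Concretely, ``there are no $\omega_1$-strongly compact cardinals'' holds in the constructible universe $L$, and in $L$ the class ${}^{\perp_0}\mathbb{Z}$ is a counterexample. No step here presents a genuine obstacle; the only care required is to track the direction of the equivalence in Corollary~\ref{cor_MainCor} and to phrase the unprovability claim with the customary consistency caveat.
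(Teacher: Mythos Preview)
Your proposal is correct and matches the paper's approach exactly: the paper gives no proof beyond the \qed, having already indicated in the preceding sentence that the corollary is an immediate consequence of Corollary~\ref{cor_MainCor} and Lemma~\ref{lem_NicePropertiesTorsionClasses}. Your additional justification of the second sentence (via $L$ or via G\"odel's theorem) is accurate and more explicit than what the paper writes, but not a different argument.
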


\begin{remark}
    The following statement has recently been proven in \cite[Remark 4 and Corollary 3.4]{PPT}: Assume Vop\v{e}nka’s Principle holds. Then for any ring $R$, each class of $R$-modules that is closed under finite direct sums, extensions, and direct limits is deconstructible.
\end{remark}

\section{Questions}\label{sec_Questions}

\begin{question}
    Suppose that $\kappa\geq \omega_2$ is a regular cardinal such that ${}^{\perp_0}X$ is deconstructible for all abelian groups of size ${<}\kappa$. Must $\kappa$ be strongly compact?
\end{question}

Much of the literature focuses on deconstructibility and precovering properties of ``roots of Ext" classes, i.e., classes of the form
\[
{}^\perp \mathcal{B}:= \big\{ A \ : \ \forall B \in \mathcal{B} \ \text{Ext}^1(A,B)=0 \big\}
\]
for some fixed class $\mathcal{B}$ of modules.  Our results in Section \ref{sec: max decon needs large cardinals} consistently separates deconstructibility from precovering for ${}^{\perp_0} \mathbb{Z}$, but this is very far from being a root of Ext; it does not even contain the ring $\mathbb{Z}$.  

This suggests asking whether deconstructibility is equivalent to precovering for classes of the form ${}^\perp \mathcal{B}$, but there is a caveat: the first author proved in \cite{Cox_VP_CotPairs} that it is consistent, relative to consistency of Vop\v{e}nka's Principle (VP), that over every \emph{hereditary} ring (such as $\mathbb{Z}$), \emph{every} class of the form ${}^\perp \mathcal{B}$ is deconstructible.  So deconstructibility is trivially equivalent to precovering for roots of Ext (over hereditary rings) in that model.  But the following questions are open:

\begin{question}\label{q_ConPrecoverNotDecon_Ab}
Is it consistent with ZFC that there is a class $\mathcal{B}$ of abelian groups such that ${}^\perp \mathcal{B}$ is precovering, but not deconstructible?
\end{question}

\begin{question}
Does ZFC prove the existence of a ring $R$ and a class $\mathcal{B}$ of $R$-modules, such that ${}^\perp \mathcal{B}$ is precovering, but not deconstructible?  By the remarks above, such a ring could not be provably hereditary (unless VP is inconsistent).
\end{question}

Both questions are even open if we replace ``classes of the form ${}^\perp \mathcal{B}$" with ``classes that are closed under transfinite extensions and contain the ring" (such classes would contain all free modules).

\begin{bibdiv}
\begin{biblist}
\bibselect{Bibliography}
\end{biblist}
\end{bibdiv}

\end{document}